\documentclass[12pt]{amsart}
\usepackage{graphicx}
\usepackage{epstopdf}
\usepackage{url}
\usepackage{latexsym}
\usepackage{amsfonts,amsmath,amssymb}
\newtheorem{theorem}{Theorem}[section]
\newtheorem{proposition}[theorem]{Proposition}
\newtheorem{lemma}[theorem]{Lemma}
\newtheorem{definition}[theorem]{Definition}

\newtheorem{example}[theorem]{Example}




\def\Z{\mathbb{Z}}

\date{\today}
\begin{document}

\title[Alternating runs]{Generating functions of permutations with respect to their alternating runs}

\author{Mikl\'os B\'ona}
\address{Department of Mathematics, University of Florida, $358$ Little Hall, PO Box $118105$,
Gainesville, FL, $32611-8105$ (USA)}
\email{bona@ufl.edu}

\begin{abstract} We present a short, direct proof of the fact that the generating function of all permutations of a fixed length $n\geq 4$ 
is divisible by $(1+z)^m$, where $m=\lfloor (n-2)/2 \rfloor$. \end{abstract}

\maketitle

\section{Introduction}

Let $p=p_1p_2\cdots p_n$ be a permutation. For an index $i\in [2,n-1]$,  we say that $p$ changes direction at $i$ if $p_{i-1}<p_i>p_{i+1}$ or $p_{i-1}>p_i<p_{i+1}$. Furthermore, we say that $p$ has $k$ alternating runs if $p$ changes directions a total of $k-1$ times. 

Let $\hbox{run(p)}$ be the number of alternating runs of $p$, and let 
\[R_n(z) =\sum_{p} z^{\textup{run(p)}},\]
where the sum is taken over all permutations of length $n$.
In this note, we prove that for $n\geq 4$, the polynomial  $R_n(z)$ is divisible by a high power of $(1+z)$, namely by $(1+z)^m$, where
$m=\lfloor (n-2)/2 \rfloor$. 

This result was known before, by an analytic proof given by Herbert Wilf, that can be read also in \cite{combperm}, that was based on 
the relation between the Eulerian polynomials and $R_n(z)$, and also by an induction proof by Richard Ehrenborg and the present author 
\cite{bona-ehr} that also touched upon Eulerian polynomials. However, in this paper, we provide a direct, non-inductive proof. 

\section{A group action on permutations}

Let $p=p_1p_2\cdots p_n$ be a permutation. The {\em complement} of $p$ is the permutation $\overline{p}=n+1-p_1 \ n+1 -p_2 \ \cdots n+1-p_n $. For instance, 
the complement of 425613 is 352164. It is clear that $p$ and $\overline{p}$ have the same number of alternating runs, since the diagram of $\overline{p}$ is just the diagram of $p$
reflected through a horizontal line. In what follows, we will say {\em flipped} instead of {\em reflected through a horizontal line}. 
Note that this symmetry implies that all coefficients of $R_n(z)$ are even for $n\geq 2$. 

Let $s$ be a string of entries in $p$ that are in consecutive positions. Let $S$ be set of entries that occur in $s$, in other words, 
the {\em underlying set} of $s$. Then the {\em complement of $s$ relative to  $S$} 
is the string obtained from $s$ so that for each $j$, the $j$th smallest entry of $S$ is replaced by the $j$th largest entry of $S$.   For example, if s=24783, then the complement of $s$
relative to its underlying set $S$  is 84327.

We will use a similar notion for sets. Let $T\subseteq U $. be finite sets.  Let $T=\{t_1,t_2,\cdots ,t_j\}$, where the $t_i$ are listed in increasing order. 
Let us say that $t_i$ is the $a_i$th smallest element of $U$. 
Then the {\em vertical complement} of $T$ with respect to $U$ is the set consisting of the $a_i$th largest elements of $U$, for all $i$. For instance, if $T=\{1,4,6\}$, and $U=\{1,2,3,4,6,8,9\}$, then the vertical complement of $T$ with respect to $U$ is $\{ 3,4,9\}$. Indeed, 
$T$ consists of the first, fourth, and fifth smallest elements of $U$, so its vertical complement with respect to $U$ consists of the first, fourth, 
and fifth largest elements of $U$. 

\begin{definition}
For $1\leq i\leq n$, let $c_i$ be the transformation on the set of all permutations $p=p_1p_2\cdots p_n$ that leaves the string $p_1p_2\cdots p_{i-1}$ unchanged, and replaces the string $p_ip_{i+1}\cdots  p_n$ by its complement relative to its 
underlying set.
\end{definition}

  Note that $c_1(p)=\overline{p}$ and $c_n(p)=p$ for all $p$. 

\begin{example} \label{first}  Let $p=315462$. Then $c_3(p)=314526$, while $c_5(p)=315426$.
\end{example}

\begin{proposition} \label{easysym} Let $n\geq 4$, and let $3\leq i\leq n-1$. Let $p$ be any permutation of length $n$. 
Then one of $p$ and $c_i(p)$ has exactly one more alternating run than the other. 
\end{proposition}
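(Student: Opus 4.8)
The plan is to follow how the transformation $c_i$ affects the \emph{step-type sequence} of a permutation. For $p = p_1 \cdots p_n$, set $\eps_j = A$ if $p_j < p_{j+1}$ and $\eps_j = D$ if $p_j > p_{j+1}$, for $1 \le j \le n-1$, and write $\eps(p) = \eps_1 \eps_2 \cdots \eps_{n-1}$. Since $p$ changes direction at an index $j+1 \in [2,n-1]$ exactly when $\eps_j \ne \eps_{j+1}$, the number of alternating runs of $p$ equals $1$ plus the number of indices at which $\eps(p)$ changes letter. So the goal becomes: show that $\eps(c_i(p))$ has exactly one more, or exactly one fewer, letter change than $\eps(p)$.

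Next I would pin down how $c_i$ acts on $\eps(p)$. The prefix $p_1 \cdots p_{i-1}$ is left alone, so $\eps_1, \dots, \eps_{i-2}$ are untouched. The suffix $p_i \cdots p_n$ is replaced by its complement relative to its underlying set; since that operation reverses the relative order of any two entries of the block, it \emph{flips} ($A \leftrightarrow D$) each of $\eps_i, \dots, \eps_{n-1}$. The one letter that behaves unpredictably is $\eps_{i-1}$, the type of the step $p_{i-1} \to p_i$: the left endpoint is fixed, but $p_i$ is swapped for some other entry of the block, so $\eps_{i-1}$ becomes an unknown letter $\eps'_{i-1}$. Flipping every letter of a binary string does not change its number of letter changes, so the letter-change counts of $\eps(p)$ and $\eps(c_i(p))$ can differ only through the two adjacent pairs touching position $i-1$, namely $\{i-2, i-1\}$ and $\{i-1, i\}$; the hypothesis $3 \le i \le n-1$ is exactly what keeps both of these pairs inside the range $[1, n-1]$.

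It then remains to compare the letter changes inside the two length-three words $\eps_{i-2}\,\eps_{i-1}\,\eps_i$ (before $c_i$) and $\eps_{i-2}\,\eps'_{i-1}\,\overline{\eps_i}$ (after, the bar denoting a flip). Here I would invoke the elementary fact that the number of letter changes in any binary string $w_1 \cdots w_k$ has the same parity as the indicator $[w_1 \ne w_k]$. Applied to the two words, this yields parities $[\eps_{i-2} \ne \eps_i]$ and $[\eps_{i-2} \ne \overline{\eps_i}] = 1 - [\eps_{i-2} \ne \eps_i]$, which are opposite; hence the global letter-change counts of $\eps(p)$ and $\eps(c_i(p))$ differ by an odd integer. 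Since each of the two words contributes a count in $\{0,1,2\}$, that difference is at most $2$ in absolute value, so it equals $\pm 1$ --- exactly the claim. The hard part is really just the temptation to fret over the uncontrolled letter $\eps'_{i-1}$; routing everything through the parity statement is precisely what makes $\eps'_{i-1}$ drop out, so no case analysis on it is needed.
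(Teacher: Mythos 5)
Your proof is correct, and it takes a genuinely different route at the decisive step. Both arguments localize the effect of $c_i$ to the window $p_{i-2}p_{i-1}p_ip_{i+1}$: the paper observes that all changes happen there and then disposes of the statement by checking the $12$ (or, by symmetry, $6$) possible patterns of those four entries. You instead encode $p$ by its ascent--descent word $\varepsilon(p)$, note that $c_i$ fixes $\varepsilon_1,\dots,\varepsilon_{i-2}$, flips $\varepsilon_i,\dots,\varepsilon_{n-1}$ (flipping every letter of a block preserves its internal letter changes), and leaves only $\varepsilon_{i-1}$ uncontrolled; the hypotheses $3\le i\le n-1$ guarantee that exactly the two adjacencies $\{i-2,i-1\}$ and $\{i-1,i\}$ can change. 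Your parity observation --- that the number of letter changes in a binary word is congruent mod $2$ to the indicator of its endpoints differing, and that flipping $\varepsilon_i$ while fixing $\varepsilon_{i-2}$ reverses that indicator --- forces the local count to change by an odd amount bounded by $2$ in absolute value, hence by exactly $\pm 1$, with the unknown letter $\varepsilon_{i-1}'$ dropping out entirely. What this buys is the elimination of all case analysis in favor of a two-line parity argument; what the paper's version buys is that it needs no additional formalism and is immediate to verify by hand. Both are complete proofs, and yours arguably explains \emph{why} the count must move by exactly one rather than merely confirming that it does.
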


\begin{proof} As $c_i$ does not change the number of runs of the string $p_1p_2\cdots p_{i-1}$ or the number of runs of 
the string $p_ip_{i+1}\cdots p_n$ and its image, all changes occur within the four-element string $p_{i-2}p_{i-1}p_ip_{i+1}$. 
There are only 24 possibilities for the pattern of these four entries, and it is routine to verify the statement for each of the possible 12 pairs.
In fact, checking the six pairs in which $p_{i-2}<p_{i-1}$ is sufficient, for symmetry reasons. 
\end{proof}

In other words, $z^{\textup{run}(p)}+z^{\textup{run}(c_i(p))}$ is divisible by $1+z$.

The following lemma is crucial for our purposes. 

\begin{lemma} \label{commutative}  Let $1\leq i\leq j-2\leq n-2$. Then for all permutations $p$ of length $n$, the identity $c_i(c_j(p))=c_j(c_i(p))$ holds. 
\end{lemma}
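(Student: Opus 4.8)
The plan is to show that the two compositions $c_i \circ c_j$ and $c_j \circ c_i$ do exactly the same thing to a permutation $p$, by tracking what happens to each of the three natural blocks of positions: $[1,i-1]$, $[i,j-1]$, and $[j,n]$. The key point is that $c_i$ acts only on positions $\geq i$ and $c_j$ only on positions $\geq j$, and since $i \leq j-2 < j$, the block $[1,i-1]$ is fixed by both operators in either order, so there is nothing to check there.

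First I would set up notation: write $A = \{p_i,\dots,p_{j-1}\}$ for the set of values in the middle block and $B=\{p_j,\dots,p_n\}$ for the set of values in the last block, so that $A \cup B$ is the underlying set of the whole string $p_i\cdots p_n$ and $A,B$ are disjoint. The operator $c_j$ replaces $p_j\cdots p_n$ by its complement relative to $B$, leaving positions $<j$ alone. The operator $c_i$ replaces $p_i\cdots p_n$ by its complement relative to $A\cup B$. The crux is to understand how these interact on the last block. When we apply $c_i$ after $c_j$: the middle block $p_i\cdots p_{j-1}$ (still the original entries, since $c_j$ didn't touch it) gets complemented within $A\cup B$; the last block, which is now $c_j$'s output on $B$, gets complemented within $A\cup B$. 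When we apply $c_j$ after $c_i$: first $c_i$ complements all of $p_i\cdots p_n$ within $A\cup B$ — note this carries the original middle entries (set $A$) to some new set $A'$ and the original last entries (set $B$) to some new set $B'$, with $A'\cup B' = A\cup B$, but in general $A'\neq A$ and $B'\neq B$. Then $c_j$ complements the last block relative to $B'$.

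The main obstacle — and the reason the lemma is not completely trivial — is precisely that $c_j$'s ``underlying set'' changes depending on whether $c_i$ has already acted: in $c_j\circ c_i$ the last-block complement is taken relative to $B'$, not $B$. So the heart of the argument is a lemma about iterated complements of strings: if a string $s$ with underlying set $S$ sits inside a larger ordered set $W\supseteq S$, then complementing $s$ within $S$ and then complementing the result within $W$ (as a substring, with $W\setminus S$ playing a passive ``order-reference'' role) gives the same string as first complementing within $W$ and then, noting the image set is the vertical complement $S'$ of $S$ in $W$, complementing within $S'$. Concretely: the $k$th smallest entry of $S$ is the $a_k$th smallest of $W$; after the outer $W$-complement it becomes the $a_k$th largest of $W$; these are exactly the elements of $S'$, and the inner complement's effect is to reverse the roles of ``$k$th smallest'' and ``$k$th largest'' among them — and the two orders of composition both produce, in the position originally holding the $k$th smallest of $S$, the element of $S'$ that is its $k$th largest. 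This is the one genuine computation, and I expect it to come down to chasing the index $a_k \mapsto$ (rank within $W$) $\mapsto$ (rank within $S$ or $S'$) carefully; the vertical-complement terminology introduced before the lemma is exactly what is needed to state it cleanly. Once this substring-complement identity is in hand, applying it with $s = p_j\cdots p_n$, $S = B$, $W = A\cup B$, together with the fact that both orders complement the middle block $p_i\cdots p_{j-1}$ in the same way (within $A\cup B$, regardless of order, since $c_j$ never touches those positions and $c_i$ acts the same way on them either way), finishes the proof.
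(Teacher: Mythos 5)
Your proposal is correct and follows essentially the same route as the paper: the same three-block decomposition of the positions into $[1,i-1]$, $[i,j-1]$, $[j,n]$, with the first block untouched, the middle block complemented once within $A\cup B$ under either order, and all the real work concentrated in the last block, where your ``substring-complement identity'' is precisely what the paper verifies --- though the paper sidesteps the explicit rank-chasing by splitting the last block's data into its pattern (complemented twice, hence restored) and its underlying set (the vertical complement of $\{p_j,\dots,p_n\}$ with respect to $\{p_i,\dots,p_n\}$ under either order). One minor slip in your sketch: the position originally holding the $k$th smallest element of $S$ ends up holding the $k$th \emph{smallest} (not largest) element of $S'$ under both orders of composition; this does not affect the argument, since all that matters is that the two outcomes coincide.
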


\begin{proof}
Neither $c_i$ nor $c_j$ acts on any part of the initial segment $p_1p_2\cdots p_{i-1}$, so that segment, unchanged, will start both  $c_i(c_j(p))$ and $c_j(c_i(p))$.
The ending segment $p_jp_{j+1}\cdots p_n$ gets flipped twice by both $c_ic_j$ and $c_jc_i$, so in the end, the pattern of the last $n-j+1$ entries will be the same in  $c_i(c_j(p))$ and
 $c_j(c_i(p))$, because in both permutations, it will be the same pattern as it was in $p$. The middle segment $p_ip_{i+1}\cdots p_j$ will get flipped once by both $c_ic_j$ and $c_jc_i$,
so on both sides, the pattern of entries in positions $i,i+1,\cdots ,j$ will be the complement of the pattern of $p_ip_{i+1}\cdots p_j$.  

Finally, the {\em set} of entries in the last $n-j+1$ positions is the same in both  $c_i(c_j(p))$ and $c_j(c_i(p))$, since both sets are equal to the vertical complement of the set
 $\{p_j,p_{j+1},\cdots ,p_n\}$ with respect to the set $\{p_i,p_{j+1},\cdots ,p_n\}$ 
\end{proof}

Note that $c_i(c_j(p))\neq p$, since the segment $p_i\cdots p_{j-1}$ is of length at least two, and gets flipped at least once.

\begin{example} {\em  Continuing Example \ref{first}, we get that $c_3(c_5(315462))=c_5(c_3(315462))=314562$. } \end{example}

Now let $n\geq 4$ be any integer. If $n$ is even, 
let  \[\mathcal C_n=\{c_3, c_5,c_7,\cdots ,c_{n-1}\} .\] 
If $n$ is odd, let  \[\mathcal C_n=\{c_3, c_5,c_7,\cdots ,c_{n-2}\} .\] 
 In both cases, $\mathcal C_n$ consists of $m=\lfloor (n-2)/2 \rfloor$ operations. Each of these operations are involutions, and 
by Lemma \ref{commutative}, they pairwise commute. Therefore, they define a group $G_m \cong \Z_2^m$ that acts on the set of all permutations of length $n$. 

Note that with this set of generators,  $c_i(c_j(p))\neq p$, since the segment $p_i\cdots p_{j-1}$ is of length at least two, and gets flipped exactly once. Also note that the action of the individual $c_i$ on the set of all permutations of length $n$ is independent in the following sense. 
If $p$ is a permutation, and $c_i(p)$ has one more alternating run then $p$, then $c_j(c_i(p))$ also has one more alternating run than 
$c_j(p)$.

The action of $G_m$ on the set of all permutations of length $n$ creates orbits of size $2^m$. 

\begin{lemma} \label{orbits}
Let $A$ be any orbit of $G_m$ on the set of all permutations of length $n$. Then the equality
\[\sum_{p \in A} z^{\textup{run(p)}} = z^a(1+z)^m \]
holds, where $a$ is a nonnegative integer. 
\end{lemma}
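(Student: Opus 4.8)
The plan is to pick a base permutation $p\in A$, record the one-step effect of each generator on the run statistic, and then show these effects simply add up across the orbit. For each index $i$ with $c_i\in\mathcal C_n$, Proposition~\ref{easysym} gives $\delta_i:=\textup{run}(c_i(p))-\textup{run}(p)\in\{+1,-1\}$. Writing $c_S$ for the composition $\prod_{i\in S}c_i$ (well defined for every $S\subseteq\{i:c_i\in\mathcal C_n\}$ by Lemma~\ref{commutative}), the heart of the argument is the identity
\[
\textup{run}(c_S(p))=\textup{run}(p)+\sum_{i\in S}\delta_i .
\]

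To prove this I would induct on $|S|$. Write $S=S'\cup\{j\}$ with $j\notin S'$; then $c_S(p)=c_j(c_{S'}(p))$, so by the inductive hypothesis it suffices to show that $\textup{run}(c_j(c_{S'}(p)))-\textup{run}(c_{S'}(p))=\delta_j$, i.e. that applying the generators indexed by $S'$ does not change whether $c_j$ raises or lowers the number of runs. This is precisely the independence property recorded just after Lemma~\ref{commutative}, applied one generator at a time: using commutativity to slide $c_j$ past $c_i$ for each $i\in S'$, a short secondary induction on $|S'|$ reduces the claim to that single stated instance. Combining with $\textup{run}(c_{S'}(p))=\textup{run}(p)+\sum_{i\in S'}\delta_i$ closes the induction.

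Granting the identity, the conclusion is a one-line computation. The orbit $A$ consists of the $2^m$ pairwise distinct permutations $c_S(p)$, so
\[
\sum_{q\in A}z^{\textup{run}(q)}=\sum_{S}z^{\textup{run}(p)+\sum_{i\in S}\delta_i}=z^{\textup{run}(p)}\prod_{i:\,c_i\in\mathcal C_n}\bigl(1+z^{\delta_i}\bigr).
\]
Each factor equals $1+z$ if $\delta_i=1$ and $z^{-1}(1+z)$ if $\delta_i=-1$, so the product is $z^{-t}(1+z)^m$, where $t=\#\{i:\delta_i=-1\}$. Hence the sum equals $z^a(1+z)^m$ with $a=\textup{run}(p)-t$, and $a\ge 0$ because the left-hand side is an honest polynomial (a sum of monomials with nonnegative exponents); in fact $a$ is the least value of $\textup{run}$ on the orbit.

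The only non-formal input is the independence property, which the excerpt already supplies, so there is no serious obstacle: the work is purely organizational. The one place to take care is upgrading the single ``one-step'' independence statement to the full additive formula via the double induction, and then remembering to absorb the $z^{-1}$ contributed by each $\delta_i=-1$ into the prefactor $z^a$, so that the final exponent $a$ is manifestly a nonnegative integer.
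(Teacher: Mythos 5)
Your proof is correct and follows essentially the same route as the paper: both arguments combine Proposition~\ref{easysym} with the independence remark following Lemma~\ref{commutative} to show that the run-count changes of the $m$ commuting involutions add up independently across the orbit, and then expand the orbit sum binomially. The only differences are cosmetic --- the paper anchors at the minimal permutation $q$ so that every $\delta_i=+1$, whereas you use an arbitrary base point and absorb the negative $\delta_i$ into the prefactor $z^a$, and you spell out explicitly the double induction that the paper leaves implicit.
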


\begin{proof} Let $p\in A$, and going through $p$ left to right, let us apply or not apply each element of $\mathcal C_n$ so as to minimize the
number of alternating runs of the obtained permutation. That is, if applying $c_i$ increases the number of alternating run, then do not apply it,
if it decreases the number of alternating runs, then apply it. Let $q$ be the obtained permutation. Then we call $q$ the {\em minimal}
permutation in $A$, since among all permutations in $A$, it is $q$ that has the smallest number of alternating runs. Now elements of $A$ 
with $i$ more alternating runs than $q$ can be obtained from $q$ by applying exactly $i$ elements of $\mathcal C_n$ to $q$. As there are
${m \choose i}$ wys to choose $i$ such elements, our statement is proved by summing over $i$. 
\end{proof} 

\begin{theorem}
For $n\geq 4$, the equality \[R_n(z) = (1+z)^m \sum_{q} z^{\textup{run(q)}} \]
holds, where the summation is over permutations $q$ that are minimal in their orbit under the action of $G_m$. Here
 $m=\lfloor (n-2)/2 \rfloor $.
\end{theorem}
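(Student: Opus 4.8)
The plan is to deduce the theorem immediately from Lemma~\ref{orbits} by summing over orbits. First I would observe that since $G_m$ acts on the finite set $S_n$ of all permutations of length $n$, this set is partitioned into disjoint orbits $A_1, A_2, \ldots, A_r$, so that
\[ R_n(z) = \sum_{p \in S_n} z^{\textup{run}(p)} = \sum_{t=1}^{r} \sum_{p \in A_t} z^{\textup{run}(p)}. \]
Next I would apply Lemma~\ref{orbits} to each orbit $A_t$: there is a nonnegative integer $a_t$ with $\sum_{p \in A_t} z^{\textup{run}(p)} = z^{a_t}(1+z)^m$. Substituting and factoring out the common $(1+z)^m$ yields
\[ R_n(z) = \sum_{t=1}^{r} z^{a_t}(1+z)^m = (1+z)^m \sum_{t=1}^{r} z^{a_t}. \]

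It then remains only to identify $\sum_{t} z^{a_t}$ with $\sum_{q} z^{\textup{run}(q)}$, the sum over permutations that are minimal in their orbit. For this I would recall, from the proof of Lemma~\ref{orbits}, that each orbit $A_t$ contains a permutation $q_t$ obtained from any of its elements by the greedy procedure ``apply $c_i$ if and only if it decreases the number of runs,'' and that $q_t$ is the \emph{unique} element of $A_t$ with the smallest number of alternating runs. Uniqueness holds because the multiset $\{\,\textup{run}(p) : p \in A_t\,\}$ equals $\{a_t, a_t+1, \ldots, a_t+m\}$ with the value $a_t+i$ occurring $\binom{m}{i}$ times, so the minimum value $a_t$ is attained exactly once. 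Hence $a_t = \textup{run}(q_t)$, the permutations $q_1,\ldots,q_r$ are precisely the permutations minimal in their orbit (each appearing once as $t$ ranges over the orbits), and $\sum_{t} z^{a_t} = \sum_{q} z^{\textup{run}(q)}$, which is the claimed identity.

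The argument is essentially bookkeeping once Lemma~\ref{orbits} is in hand, so there is no serious obstacle; the one point deserving a word of care is the well-definedness of ``the minimal permutation in an orbit,'' i.e.\ that the minimizer of $\textup{run}$ on an orbit is unique, and as noted this is forced by the shape $z^{a}(1+z)^m$ of the orbit generating function established in Lemma~\ref{orbits}. (Alternatively, one can note that the greedy procedure is deterministic and, by the ``independence'' remark preceding Lemma~\ref{orbits}, produces the same permutation no matter which element of the orbit one starts from.) If one also wishes to record that each orbit has exactly $2^m$ elements, it suffices to observe that a nontrivial element $c_{i_1}\cdots c_{i_k}$ of $G_m$ with $i_1 < \cdots < i_k$ complements the nonempty block of positions $i_1, i_1+1, \ldots, i_2-1$ (those positions being flipped an odd number of times), hence moves every permutation; but this is not needed for the statement of the theorem.
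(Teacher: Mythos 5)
Your proof is correct and takes essentially the same route as the paper, whose entire argument is to sum Lemma~\ref{orbits} over all orbits and factor out $(1+z)^m$. Your additional verification that the minimizer of $\textup{run}$ on each orbit is unique (forced by the shape $z^{a}(1+z)^m$ of the orbit polynomial) is a sensible piece of bookkeeping that the paper leaves implicit, but it does not change the approach.
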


\begin{proof} This follows from Lemma \ref{orbits} by summing over all orbits. \end{proof}

\end{document}